

\documentclass[ECP,preprint]{ejpecp} 




\usepackage{xcolor}
\usepackage{mathrsfs}
\usepackage[inline, shortlabels]{enumitem}
 \usepackage{relsize}
 \usepackage{nicefrac}


\SHORTTITLE{Ill-posedness of the pure-noise Dean--Kawasaki equation}

\TITLE{Ill-posedness of the pure-noise Dean--Kawasaki equation\support{This research was partly funded by the Austrian Science Fund (FWF) project \href{https://doi.org/10.55776/ESP208}{{10.55776/ESP208}.}
}\
    \thanks{
    This work started while the first named author was a guest of the Mathematics Department of the University of Hamburg.
    It is his pleasure to thank the Department for their kind hospitality.
    The authors are grateful to Max-Konstantin von Renesse for some helpful comments.}
    } 



\AUTHORS{%
  Lorenzo Dello Schiavo\footnote{Dipartimento di Matematica, Università degli Studi di Roma ``Tor Vergata'', Via della Ricerca Scientifica, 1, 00133 Roma, Italy.
    \EMAIL{delloschiavo@mat.uniroma2.it} .
    }
  \and 
Vitalii Konarovskyi\footnote{Faculty of Mathematics informatics and natural sciences, University of Hamburg, Bundesstraße 55, 20146 Hamburg, Germany. \EMAIL{vitalii.konarovskyi@uni-hamburg.de}.}
\footnote{Institute of Mathematics of NAS of Ukraine, Tereschenkivska st. 3, 01024 Kyiv, Ukraine.}
}



\KEYWORDS{Dean--Kawasaki equation; SPDE; Wasserstein diffusion} 

\AMSSUBJ{60H15; 60G57; 82C31} 

\SUBMITTED{} 
\ACCEPTED{} 




\VOLUME{0}
\YEAR{0}
\PAPERNUM{0}
\DOI{0}


\ABSTRACT{We prove that the Dean--Kawasaki-type stochastic partial differential equation
\[
\partial \rho= \nabla\cdot (\sqrt{\rho\,}\, \xi) + \nabla\cdot \tparen{\rho\, H(\rho)} \comma
\]
with vector-valued space-time white noise~$\xi$, does not admit solutions for any initial measure and any vector-valued bounded measurable function~$H$ on the space of measures.
This applies in particular to the pure-noise Dean--Kawasaki equation ($H\equiv 0$).
The result is sharp, in the sense that solutions are known to exist for some unbounded~$H$.
}

\newtheorem*{theorem*}{Theorem}


\usepackage{xparse}

\ExplSyntaxOn
\NewDocumentCommand{\makeabbrev}{mmm}
 {
  \yoruk_makeabbrev:nnn { #1 } { #2 } { #3 }
 }

\cs_new_protected:Npn \yoruk_makeabbrev:nnn #1 #2 #3
 {
  \clist_map_inline:nn { #3 }
   {
    \cs_new_protected:cpn { #2 } { #1 { ##1 } }
   }
 }
 \ExplSyntaxOff

\makeabbrev{\textbf}{tbf#1}{a,b,c,d,e,f,g,h,i,j,k,l,m,n,o,p,q,r,s,t,u,v,w,x,y,z,A,B,C,D,E,F,G,H,I,J,K,L,M,N,O,P,Q,R,S,T,U,V,W,X,Y,Z}

\makeabbrev{\textbf}{bf#1}{a,b,c,d,e,f,g,h,i,j,k,l,m,n,o,p,q,r,s,t,u,v,w,x,y,z,A,B,C,D,E,F,G,H,I,J,K,L,M,N,O,P,Q,R,S,T,U,V,W,X,Y,Z}

\makeabbrev{\textsf}{tsf#1}{a,b,c,d,e,f,g,h,i,j,k,l,m,n,o,p,q,r,s,t,u,v,w,x,y,z,A,B,C,D,E,F,G,H,I,J,K,L,M,N,O,P,Q,R,S,T,U,V,W,X,Y,Z}

\makeabbrev{\mathsf}{mss#1}{a,b,c,d,e,f,g,h,i,j,k,l,m,n,o,p,q,r,s,t,u,v,w,x,y,z,A,B,C,D,E,F,G,H,I,J,K,L,M,N,O,P,Q,R,S,T,U,V,W,X,Y,Z}

\makeabbrev{\mathfrak}{mf#1}{a,b,c,d,e,f,g,h,i,j,k,l,m,n,o,p,q,r,s,t,u,v,w,x,y,z,A,B,C,D,E,F,G,H,I,J,K,L,M,N,O,P,Q,R,S,T,U,V,W,X,Y,Z,
sl,gl
}

\makeabbrev{\mathrm}{mrm#1}{a,b,c,d,e,f,g,h,i,j,k,l,m,n,o,p,q,r,s,t,u,v,w,x,y,z,A,B,C,D,E,F,G,H,I,J,K,L,M,N,O,P,Q,R,S,T,U,V,W,X,Y,Z}

\makeabbrev{\mathbf}{mbf#1}{a,b,c,d,e,f,g,h,i,j,k,l,m,n,o,p,q,r,s,t,u,v,w,x,y,z,A,B,C,D,E,F,G,H,I,J,K,L,M,N,O,P,Q,R,S,T,U,V,W,X,Y,Z}

\makeabbrev{\mathcal}{mc#1}{A,B,C,D,E,F,G,H,I,J,K,L,M,N,O,P,Q,R,S,T,U,V,W,X,Y,Z}

\makeabbrev{\mathbb}{mbb#1}{A,B,C,D,E,F,G,H,I,J,K,L,M,N,O,P,Q,R,S,T,U,V,W,X,Y,Z}

\makeabbrev{\mathscr}{ms#1}{A,B,C,D,E,F,G,H,I,J,K,L,M,N,O,P,Q,R,S,T,U,V,W,X,Y,Z}

\makeabbrev{\mathrm}{#1}{
id,ran,rk,diag,stab,ann,conv,pr,ev,tr,End,Hom,sgn,im,op,can,fin,ext,red,tot,lex,Aut,Inn,unit,
%
rot,usc,lsc,Lip,lip,bSymLip,osc,AC,loc,coz,z,
%
supp,Opt,Adm,Cpl,Geo,GeoOpt,GeoAdm,GeoCpl,reg,res,
%
bd,co,Ric,Exp,dExp,dist,seg,Seg,cut,fcut,Cut,SDiff,Iso,Isom,diam,cl,Homeo,Diff,Der,vol,dvol,inj,relint, Graph, sub,
%
var,law,Var,Poi,Gam,pa,so,iso,fs,inv,pqi,mix,erg,form,
TestF,
ob,cod,inp,
}

\makeabbrev{\mathsf}{#1}{CD,BE,MCP,Ent,wMTW,MTW,Ch,RCD,EVI,Rad,dRad,SL,cSL,dSL,ScL,Irr,SC,wFe,VA,MetMeas,UMeas,CSMet,Met,USp,Meas,Mbl,alg,Alg}

\makeabbrev{\mathsc}{#1}{mmaf,cg}

\newcommand{\emp}{\varnothing}
\newcommand{\R}{\mathbb{R}}
\newcommand{\N}{\mathbb{N}}
\newcommand{\Z}{\mathbb{Z}}

\newcommand{\fstop}{\ \text{.}}
\newcommand{\comma}{\ \text{,} \ }

\newcommand{\eps}{\varepsilon}

\newcommand{\Leb}{\msL}


\newcommand{\Mbp}{\msM_b^+}

\renewcommand{\complement}{\mathrm{c}}

\newcommand{\mathsc}[1]{\text{\textsc{#1}}}
\newcommand{\emparg}{{\,\cdot\,}}

\DeclareMathOperator{\eqdef}{\coloneqq}



\newcommand{\car}{\mathbf{1}}


\newcommand{\diff}{\mathop{}\!\mathrm{d}}						

\newcommand{\abs}[1]{\left\lvert#1\right\rvert}						
\newcommand{\norm}[1]{\left\lVert#1\right\rVert}					
\newcommand{\set}[1]{\left\{#1\right\}}							
\newcommand{\paren}[1]{\left(#1\right)}							
\newcommand{\tparen}[1]{\big({#1}\big)}


\newcommand{\quadvar}[1]{\left[#1\right]}							



\newcommand{\restr}[1]{\big\rvert_{#1}}

	
\newcommand{\seq}[1]{\paren{#1}}								
\newcommand{\tseq}[1]{{\big(#1\big)}}

\newcommand{\Cb}{\mcC_b}									

\allowdisplaybreaks


\begin{document}


\section{Introduction and the main result}
Let~$\mbbM^d$ be either the standard $d$-dimensional Euclidean space~$\R^d$ or the flat $d$-dimension\-al torus~$\mbbT^d$, $d\geq 1$.
For~$k\in \N_0$, we let~$\Cb^k$ be the space of all continuous and bounded real-valued functions on~$\mbbM^d$ with continuous and bounded derivatives up to order~$k$, and we set~$\Cb\eqdef\Cb^0$, endowed with the uniform norm~$\norm{\emparg}_0$.
For a Borel measure~$\mu$ on~$\mbbM^d$ and a Borel function~$f\colon \mbbM^d\to\R$, we write~$\mu f\eqdef \int f \diff\mu$ whenever the integral makes sense.
We denote by~$\Mbp$ the space of all positive finite Borel measures on~$\mbbM^d$, endowed with the narrow topology, i.e.\ the coarsest topology for which all the functionals~$\mu\mapsto \mu f$, with~$f\in\Cb$, are continuous.

On~$\mbbM^d$ we consider the \emph{Dean--Kawasaki equation}
\begin{equation}\label{eq:DK}
\diff\mu_t =  \alpha\, \Delta\mu_t \diff t+ G(\mu_t)\diff t + \nabla \cdot (\sqrt{\mu_t}\ \xi)\comma
\end{equation}
where~$\alpha\geq 0$ is a parameter,~$G\colon \Mbp\to\R$ is Borel measurable,~$\xi$ is an $\R^d$-valued space-time white noise, and~$\seq{\mu_t}_{t\geq 0}$ is an $\Mbp$-valued stochastic process with a.s.\ continuous paths.

\medskip

The equation with~$\alpha>0$ has been proposed by K.~Kawasaki in~\cite{Kaw94} and, independently, by D.~S.~Dean in~\cite{Dea96}, to describe the density function of a system of $N\gg 1$ particles subject to a diffusive Langevin dynamics, with the noise~$\xi$ describing the particles' thermal fluctuations.
Equations like~\eqref{eq:DK} ---~possibly with a different non-linearity in the noise term~--- fall within the class of Ginzburg--Landau stochastic phase field models, and effectively describe super-cooled liquids, colloidal suspensions, the glass-liquid transition, bacterial patterns, and other systems; see, e.g., the recent review~\cite{VruLoeWitt20}.

From a mathematical point of view, these equations model in the continuum the \emph{fluctuating hydrodynamic theory} of interacting particle systems; see, e.g.,~\cite{QuaRezVar99,DirStaZim16,BenKipLan95,FehGes23} and the review~\cite{BerDeSGabJon15}.
A specific interest in the case of~\eqref{eq:DK} ---~i.e., with a square-root non-linearity in the noise term~--- is partially motivated by the structure of the noise in connection with the geometry of the $L^2$-Kantorovich--Rubinstein--Wasserstein space~$(\msP_2,W_2)$.
Indeed, in the free case ($H\equiv 0$), a solution~$\mu_t$ to~\eqref{eq:DK} with~$\alpha=1$ is an intrinsic random perturbation of the gradient flow of the Boltzmann--Shannon entropy on~$\msP_2$ by a noise~$\xi$ distributed according to the energy dissipated by the system, i.e.\ by the natural isotropic noise arising from the Riemannian structure of~$\msP_2$, see~\cite{JacZim14,KonvRe17,KonvRe18,AdaDirPelZim11}.

We would like to stress that we consider here Dean--Kawasaki-type equations with \emph{white} noise: a very fruitful theory has been developed for similar equations with \emph{colored}, \emph{truncated}, or otherwise \emph{approximated} noise (both of It\^o and Stratonovich type), abstractly~\cite{CorFis23,FehGes23,FehGes24b,FehGes24,GesGvaKon22}, numerically~\cite{CorFis23b,CorFisIngRai23,DjuKrePer24}, and ---~for both colored and white noise~--- approaching concrete applications~\cite{DirStaZim16,EmbDirZimRei18}.

\subsection{Main result}
A rigorous definition of solutions to~\eqref{eq:DK} was introduced by V.~Konarovskyi, T.~Leh\-mann, and M.-K.~von~Renesse in~\cite{KonLehvRe19} for~$G\equiv 0$, and in~\cite{KonLehvRe19b} when
\begin{equation}\label{eq:DefH}
G(\mu) = \nabla \cdot \tparen{\mu\, H(\mu)}
\end{equation}
for~$H\colon \Mbp\to\R^d$, as we now recall.

\begin{definition}[Martingale solutions, cf.~{\cite[Dfn.~1]{KonLehvRe19b}}]\label{d:Solution}
Fix~$T\in (0,\infty)$ and let~$(\Omega,\msF,\mbfP)$ be a complete probability space. 
A continuous~$\Mbp$-valued process~$\mu_\bullet\eqdef\seq{\mu_t}_{t\in[0,T]}$ on~$(\Omega,\msF)$ is a \emph{solution to~\eqref{eq:DK}} (\emph{up to time~$T$}) if, for each~$f\in\Cb^2$ the process~$M^f_\bullet\eqdef\tseq{M^f_t}_{t\in [0,T]}$ with
\[
M^f_t\eqdef \mu_t f - \mu_0 f- \int_0^t \mu_s\tparen{ \tfrac{\alpha}{2}\Delta f+ \nabla f \cdot H(\mu_s)}\diff s\comma \qquad t\in [0,T] \comma
\]
is a continuous $\mbfP$-martingale on~$(\Omega,\msF)$ with respect to the filtration~$\msF_\bullet\eqdef \seq{\msF_t}_{t\in[0,T]}$ generated by~$\mu_\bullet$, with quadratic variation
\[
\quadvar{M^f}_t = \int_0^t \mu_s \abs{\nabla f}^2 \diff s\comma \qquad t \in [0,T]\fstop
\]
\end{definition}

In the case when~$\alpha>0$ and~$H(\mu)=\nabla \frac{\delta F(\mu)}{\delta\mu}$ for some sufficiently smooth and \emph{bounded}~$F\colon \Mbp\to\R$, Konarovskyi, Lehmann, and von~Renesse have shown in~\cite{KonLehvRe19,KonLehvRe19b} that~\eqref{eq:DK} admits  solutions if and only if the initial datum~$\mu_0$ is the empirical measure of a finite particle system, i.e.~$\mu_0$ is a purely atomic measure and each atom has mass~$1/\alpha$.
In this case, the solution~$\mu_\bullet$ exists for all times, is unique and identical with the empirical measure of the Langevin particle systems with mean-field interaction~$F$.
Further extensions of these rigidity results were subsequently obtained by Konarovskyi and M\"uller in~\cite{KonMue23} and by M\"uller, von Renesse, and Zimmer in~\cite{MuevReZim24}.

Their technique, however, does not apply to the case~$\alpha=0$, hence in particular it does not cover the pure-noise Dean--Kawasaki equation.
Here, we complete the picture by addressing precisely this case.

\begin{theorem}\label{t:Main}
Let~$\alpha=0$ and~$G(\mu)= \nabla \cdot \tparen{\mu\, H(\mu)}$ for some bounded Borel~$H\colon\Mbp\to\R^d$. Then~\eqref{eq:DK} has no solutions for any initial condition~$\mu_0\in\Mbp$.
\end{theorem}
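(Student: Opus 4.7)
The plan is to reach a contradiction by applying the martingale problem of Definition~\ref{d:Solution} to trigonometric test functions, and to exploit the tension between a pathwise bound on $M^f$ and the explicit formula for its quadratic variation.

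Testing with $f\equiv 1$, the process $M^1_t = \mu_t(\mbbM^d) - \mu_0(\mbbM^d)$ is a continuous martingale with vanishing quadratic variation, hence identically zero: the total mass $m \eqdef \mu_0(\mbbM^d)$ is almost surely conserved. (We may assume $m > 0$, since $\mu_\bullet \equiv 0$ is trivially a solution when $\mu_0 = 0$.) For any $f \in \Cb^2$ this yields the almost-sure pathwise bound
\[
|M^f_t| \;\leq\; |\mu_t f| + |\mu_0 f| + \int_0^t \abs{\mu_s(\nabla f\cdot H(\mu_s))}\diff s \;\leq\; m\tparen{2\|f\|_0 + t\|H\|_0\|\nabla f\|_0}\comma
\]
so $M^f$ is square-integrable and $\mbbE\tbraket{\int_0^t\mu_s|\nabla f|^2\diff s} = \mbbE\tbraket{(M^f_t)^2}$ is bounded by the square of the right-hand side.

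Next I specialize to the pair $f_c(x) \eqdef \cos(\xi\cdot x)$, $f_s(x) \eqdef \sin(\xi\cdot x)$, with $\xi \in \R^d$ (or $\xi$ in the dual lattice of $\mbbT^d$). Both lie in $\Cb^2$ with $\|f_{c/s}\|_0 \leq 1$ and $\|\nabla f_{c/s}\|_0 \leq |\xi|$, and they satisfy the Pythagorean identity $|\nabla f_c(x)|^2 + |\nabla f_s(x)|^2 = |\xi|^2$ for every $x$. Summing the two resulting bounds and using mass conservation gives
\[
|\xi|^2 m t \;=\; \mbbE\braket{\int_0^t |\xi|^2 \mu_s(\mbbM^d)\diff s} \;\leq\; 2 m^2 \tparen{2 + t\|H\|_0|\xi|}^2\fstop
\]

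Rearranging, $(1-2mt\|H\|_0^2)\, |\xi|^2 t \leq 8m + 8mt\|H\|_0|\xi|$. Whenever $t < 1/(2m\|H\|_0^2)$---with the convention $1/0 \eqdef +\infty$ when $H \equiv 0$, which covers the pure-noise case in particular---the left-hand side grows quadratically in $|\xi|$ while the right-hand side grows only linearly, a contradiction as $|\xi|\to\infty$. Since any solution on $[0,T]$ restricts to a solution on every smaller interval $[0,T']$, this rules out solutions on every time interval. The argument is short and essentially elementary once one hits upon the $\cos/\sin$ test functions; the only points to verify carefully are the pathwise bound on $|M^f_t|$ and the resulting square-integrability of $M^f$, which follow directly from total mass conservation and the boundedness of $H$.
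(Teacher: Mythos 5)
Your proof is correct, and it takes a genuinely different route from the paper's. The mass-conservation step (test with $f\equiv\car$, vanishing quadratic variation) coincides with the paper's Lemma~\ref{l:MassConservation}, and your remark that $\mu_0=0$ admits the trivial solution is consistent with the paper's implicit normalization $c\eqdef\mu_0\mbbM^d>0$; from there the arguments diverge completely. The paper constructs a nonnegative piecewise-affine $f(x)=g(x_1)$ with $\abs{\nabla f}=1$ off a countable singular set chosen to avoid the atoms of the time-averaged first marginal $\mu^*_T$ (Lemma~\ref{l:AuxFunction}), passes smooth regularizations to the limit in the martingale problem (Proposition~\ref{p:MartConv}), identifies $B_t=\mu_t f-\mu_0 f-\int_0^t\mu_s\tparen{\nabla f\cdot H(\mu_s)}\diff s$ as a time-changed Brownian motion via L\'evy's characterization, and reaches the contradiction on the positive-probability event $\set{B_T<-c\norm{H}_0 T}$, where $\mu_T f<0$ despite $f\geq 0$. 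You instead use the second-moment identity $\mbfE\tbraket{(M^f_t)^2}=\mbfE\int_0^t\mu_s\abs{\nabla f}^2\diff s$ --- valid because your pathwise bound makes $M^f$ a bounded, hence $L^2$, martingale --- applied to the Fourier pair $\cos(\xi\cdot x)$, $\sin(\xi\cdot x)$, whose gradients satisfy the exact Pythagorean identity: the quadratic variation side grows like $\abs{\xi}^2 m t$ while the drift contributes only $O(\abs{\xi})$ to $M^f$ pathwise. Your small-time restriction $t<1/(2m\norm{H}_0^2)$ is an essential step, not a cosmetic one, since after squaring the drift bound both sides are quadratic in $\abs{\xi}$ for large $t$; you handle it correctly by restricting a putative solution to a shorter interval. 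As to what each approach buys: yours is shorter and entirely elementary --- no atom-avoiding construction, no approximation proposition, no L\'evy characterization --- and works verbatim on $\R^d$ and $\mbbT^d$. The paper's argument extracts sharper structural information (a solution would carry an actual Brownian motion violating the positivity of $\mu_t$), needs only an eikonal-type test function rather than global trigonometric modes --- which is precisely what supports their Section~3 discussion of extensions to Riemannian manifolds --- and directly accommodates random initial data with $\mbfE[\mu_0\mbbM^d]<\infty$, whereas your moment bound as written uses that the total mass $m$ is deterministic (it would extend to the random case, e.g.\ by conditioning on $\msF_0$, at the cost of some extra care).
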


This result is sharp, in the sense that existence of solutions was shown by Konarovskyi and von Renesse in~\cite{KonvRe18,KonvRe17} for the Dean--Kawasaki equation on the real line with singular drift
\begin{align}\label{eq:DKDrift}
\diff\mu_t =  \sum_{x: \mu_t\set{x}>0} \Delta\delta_x \diff t + \nabla \cdot (\sqrt{\mu_t}\ \xi)\comma
\end{align}
that is, in the case when~$H$ in~\eqref{eq:DefH} is \emph{un}bounded.
Existence of solutions to~\eqref{eq:DKDrift} were eventually constructed by the first named author also on compact manifolds~\cite{LzDS17+} and in other more general settings~\cite{LzDS24}.

\section{Proofs}
For any real-valued function~$f$ we denote by~$\Sigma_f$ the \emph{singular set} of~$f$, i.e.\ the set of points in the domain of~$f$ at which~$f$ is \emph{not} differentiable.
If not stated otherwise,~$(\Omega,\msF,\mbfP)$ is a complete probability space, and we denote by~$\mbfE$ the $\mbfP$-expectation.
Further let~$\mu_\bullet$ be a solution to~\eqref{eq:DK} up to time~$T$ on~$(\Omega,\msF,\mbfP)$ and assume that
\begin{equation}\label{eq:AssumptionRandom}
\mbfE[\mu_0\mbbM^d]<\infty \fstop
\end{equation}
(Note that~\eqref{eq:AssumptionRandom} is trivially satisfied if~$\mu_0$ is deterministic.)

We start with some preparatory lemmas.

\begin{lemma}\label{l:MassConservation}
If~$\mu_\bullet$ is a solution to~\eqref{eq:DK} up to time~$T$, then~$\mu_t\mbbM^d=\mu_0\mbbM^d$ a.s.\ for all~$t\in [0,T]$.

\begin{proof}
Choosing~$f=\car$ in the martingale problem in Definition~\ref{d:Solution}, we have~$\quadvar{M^\car}_t=0$ for all times.
It follows that~$\mu_t\mbbM^d=M^\car_t$ is a.s.\ a constant martingale, and therefore~$\mu_t\mbbM^d=\mu_0\mbbM^d$ a.s.\ for all times.
\end{proof}
\end{lemma}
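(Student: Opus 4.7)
The plan is to test the martingale problem in Definition~\ref{d:Solution} against the constant function $f\equiv 1$, for which both the drift and the noise contributions in~\eqref{eq:DK} formally vanish, so that conservation of total mass should drop out immediately.

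Concretely, I would pick $f=\car\in\Cb^2$ (the constant function~$1$ is indeed bounded with bounded derivatives on~$\mbbM^d$) and substitute into the definition. Since $\nabla\car\equiv 0$ and $\Delta\car\equiv 0$, the drift integrand $\tfrac{\alpha}{2}\Delta f + \nabla f\cdot H(\mu_s)$ vanishes pointwise, so the process
\[
M^\car_t = \mu_t\mbbM^d - \mu_0\mbbM^d
\]
is a continuous $\mbfP$-martingale with quadratic variation
\[
\quadvar{M^\car}_t = \int_0^t \mu_s\abs{\nabla\car}^2\diff s = 0
\]
for every $t\in[0,T]$.

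To close the argument I would invoke the elementary fact that a continuous martingale with identically vanishing quadratic variation is indistinguishable from its initial value (e.g.\ via It\^o's isometry, or as a degenerate case of L\'evy's characterisation). This forces $M^\car_t=0$ almost surely on~$[0,T]$, which rearranges to the desired equality $\mu_t\mbbM^d = \mu_0\mbbM^d$ a.s. I do not foresee any real obstacle: the whole argument is a one-line application of the martingale formulation to a well-chosen test function, and needs no integrability hypothesis beyond the pointwise finiteness $\mu_t\mbbM^d<\infty$ built into the definition of~$\Mbp$. In particular, assumption~\eqref{eq:AssumptionRandom} plays no role here and presumably enters only in subsequent steps where one wishes to take expectations of the total mass.
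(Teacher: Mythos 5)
Your proposal is correct and takes essentially the same route as the paper: both test the martingale problem with $f=\car$, observe that the drift term vanishes and $\quadvar{M^\car}_t=\int_0^t\mu_s\abs{\nabla\car}^2\diff s=0$, and conclude that a continuous martingale with vanishing quadratic variation is constant, giving $\mu_t\mbbM^d=\mu_0\mbbM^d$ a.s. Your closing observation is also accurate: assumption~\eqref{eq:AssumptionRandom} plays no role in this lemma and is used only afterwards, to guarantee finiteness of the auxiliary measure~$\mu^*_T$ in~\eqref{eq:AuxMeas}.
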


For each~$t>0$ define a measure~$\mu^*_t$ on~$\mbbM^1$ as
\begin{equation}\label{eq:AuxMeas}
\mu^*_t \eqdef  \mbfE\int_0^t \int_{\mbbM^{d-1}}\mu_s(\emparg, \diff x_2,\dotsc, \diff x_d) \diff s
\end{equation}
Whenever the assumption in~\eqref{eq:AssumptionRandom} is satisfied, $\mu^*_t$ is a finite measure by Lemma~\ref{l:MassConservation}, hence the set~$A_t$ of its atoms is at most countable.

\medskip

Throughout the rest of this work we assume that~\eqref{eq:AssumptionRandom} holds, we fix~$T>0$ and we set~$A\eqdef A_T$.

\begin{lemma}\label{l:AuxFunction}
There exists a continuous function~$g\colon \mbbM^1\to\R$ with the following properties:
\begin{enumerate}[$(i)$]
\item\label{i:l:AuxFunction:1} $g$ is piecewise affine, non-negative, and bounded;
\item\label{i:l:AuxFunction:3} $\Sigma_g\cap A=\emp$;
\item\label{i:l:AuxFunction:2} $\Sigma_g$ is at most countable and $\abs{g'}=1$ on~$\Sigma_g^\complement$;
\end{enumerate}
\end{lemma}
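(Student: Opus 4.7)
The plan is to take $g$ as a suitable translate of the standard triangular wave on the real line, and then to choose the translation parameter so that the kinks of $g$ avoid the at-most-countable atom set $A$.

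Concretely, set $h(x) \eqdef \mathrm{dist}(x, \Z) = \min_{k \in \Z} \abs{x-k}$ for $x \in \R$. Then $h$ is continuous, $1$-periodic, piecewise affine with $\abs{h'} = 1$ off its kink set, non-negative, and bounded by $1/2$, and its singular set is $\Sigma_h = \tfrac{1}{2}\Z$. By $1$-periodicity, the same formula also defines a continuous function on $\mbbT^1$ with the analogous properties (its singular set then being the two-point set $\set{[0], [\tfrac{1}{2}]}$). Thus, in both cases $\mbbM^1 \in \set{\R, \mbbT^1}$, $h$ already satisfies properties~$(i)$ and~$(iii)$.

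To arrange~$(ii)$, translate the template: for each $\tau \in \R$ let $g_\tau(x) \eqdef h(x-\tau)$, so that $g_\tau$ still satisfies $(i)$ and $(iii)$ and has singular set $\Sigma_{g_\tau} = \tau + \tfrac{1}{2}\Z$ (read modulo $\Z$ on the torus). The bad set of translation parameters is
\[
B \eqdef \tset{\tau \in \R : \tparen{\tau + \tfrac{1}{2}\Z} \cap A \neq \emp} = \bigcup_{a \in A} \tparen{a - \tfrac{1}{2}\Z} \fstop
\]
Since $\mu^*_T$ is a finite measure on $\mbbM^1$ by Lemma~\ref{l:MassConservation} together with assumption~\eqref{eq:AssumptionRandom}, the set of its atoms $A$ is at most countable, and hence so is $B$. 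Picking any $\tau \in \R \setminus B$ then yields $g \eqdef g_\tau$ satisfying all three properties.

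\textbf{Main obstacle.} The construction is essentially elementary; the whole content of the lemma is the observation that the kinks of a rigid triangular-wave template can be translated off any prescribed at-most-countable set, which follows once countability of $A$ is isolated. The only small thing to be mindful of is to handle the two geometries $\mbbM^1 \in \set{\R, \mbbT^1}$ in a uniform way, which the periodic triangular wave does automatically.
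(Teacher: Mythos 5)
Your proof is correct. It takes a genuinely different route from the paper in the real-line case: you translate a single rigid template, the triangular wave $h(x)=\mathrm{dist}(x,\Z)$, and use countability of the bad set $B=\bigcup_{a\in A}\tparen{a-\tfrac{1}{2}\Z}$ of shifts to pick one good translation $\tau$, whereas the paper builds the spline kink by kink, inductively choosing points $y_k\in A^\complement$ within $2^{-k}$ of a unit-spaced lattice and then verifying via the alternating sums $a_k$ that the interpolated values stay bounded by $2$. Your argument is shorter and uniform across the two geometries (the paper's torus case is in fact the same translation trick as yours, choosing $y\notin A\cup(A+\tfrac{1}{2})$); it also yields the cleaner bound $\norm{g}_0\leq\tfrac{1}{2}$ and automatic non-negativity, dispensing with the paper's preliminary reduction $g\mapsto g-\inf g$. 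What the paper's pointwise perturbation buys in exchange is robustness of hypothesis in the case $\mbbM^1=\R$: it only uses that $A^\complement$ is dense, while your translation argument genuinely needs $A$ to be small enough (countable, or more generally null or meager) so that $B\subsetneq\R$. Since $A$ is at most countable in the application --- as you correctly isolate from \eqref{eq:AssumptionRandom} and Lemma~\ref{l:MassConservation} --- both arguments apply, and yours is arguably the more economical.
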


\begin{proof}
We may dispense with showing that~$g$ is non-negative.
Indeed, suppose we have found some function~$g$ with all the required properties except non-negativity.
Then,~$g-\inf g$ still satisfies all these properties, since it has the same singular set as~$g$, and is additionally non-negative.

Assume~$\mbbM^1=\R$.
Fix~$y_0\in A^\complement$, and define inductively a countable set~$Y\eqdef\set{y_k}_{k\in\Z}$ in the following way:
if~$k\in \Z^\pm$, choose~$y_k\in A^\complement$ such that~$\abs{y_k-(y_{k\mp 1}\pm 1)} \leq 2^{-k}$.
Further set
\begin{equation*}
a_k\eqdef 
\begin{cases}
\displaystyle\sum_{i=1}^k (-1)^{i-1}(y_i-y_{i-1}) & \text{if } k\in \Z^+ \comma
\\
0 & \text{if } k=0\comma
\\
\displaystyle\sum_{i=k}^{-1} (-1)^{i+1} (y_{i+1}-y_i) & \text{if } k\in \Z^- \fstop
\end{cases}
\end{equation*}
In this way,~$Y\subset A^\complement$ and~$\abs{a_k} \leq 2$ for every~$k\in\Z$.
It follows that the linear spline~$g$ interpolating the points~$\seq{(y_k,a_k)}_{k\in\Z}$ has all the desired properties (with the possible exception of non-negativity) and in particular satisfies~$\norm{g}_0\leq \sup_k \abs{a_k}\leq  2$.

Assume~$\mbbM^1=\mbbT^1$. All sets and points in the rest of the proof are regarded~$\mod 1$.
Since~$A$ is countable,~$A_1\eqdef A\cup (A+\set{\nicefrac{1}{2}})$ is countable too, and we can choose~$y\not\in A_1$, which implies that~$y+\nicefrac{1}{2}\not\in A_1$ as well.
Then, the function~$g$ defined as the piecewise affine function with singular set~$\set{y,y+\nicefrac{1}{2}}$ and interpolating the points~$(y,0)$ and~$(y+\nicefrac{1}{2},\nicefrac{1}{2})$ has all the desired properties.
\end{proof}

\begin{proposition}\label{p:MartConv}
Fix~$T\in (0,\infty)$, and let~$\mu_\bullet\eqdef \seq{\mu_t}_{t\leq T}$ be a solution ---~if any exists~--- to~\eqref{eq:DK} for~$\alpha=0$ up to time~$T$.
Further suppose that:
\begin{itemize*}[{ }]
\item $f_n\colon \mbbM^d\to\R$ is a function in~$\Cb^2$ for each~$n\in\N$, 
\item $f\colon \mbbM^d\to\R$ is a function in~$\Cb^0$,
\item $h\colon \mbbM^d\to\R^d$ is a Borel measurable function with~$h\equiv \nabla f$ on~$\Sigma_f^\complement$,
\end{itemize*}
satisfying
\begin{enumerate}[$(a)$]
\item\label{i:p:MartConv:1} $\displaystyle\lim_n f_n=f$ uniformly on~$\mbbM^d$;
\item\label{i:p:MartConv:3} $\displaystyle \lim_n \int_0^T \mu_s \abs{\nabla f_n - h} \diff s=0$ a.s.
\end{enumerate}

Then, the process~$M_\bullet\eqdef \seq{M_t}_{t\in [0,T]}$ with
\begin{equation}\label{eq:pMartConv:0}
M_t\eqdef \mu_t f - \mu_0 f - \int_0^t \mu_s \tparen{h \cdot H(\mu_s)}\diff s\comma \qquad t\in [0,T] \comma
\end{equation}
is a martingale with respect to the filtration~$\msF_\bullet\eqdef \seq{\msF_t}_{t\in [0,T]}$ generated by~$\mu_\bullet$, with quadratic variation
\begin{equation}\label{eq:pMartConv:00}
\quadvar{M}_t = \int_0^t \mu_s \abs{h}^2 \diff s\comma \qquad t\in [0,T]\fstop
\end{equation}
\end{proposition}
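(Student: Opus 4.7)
The plan is to pass to the limit in the martingale problem of Definition~\ref{d:Solution} applied to each $f_n\in\Cb^2$: this yields a sequence of continuous $\msF_\bullet$-martingales
\[
M^{f_n}_t\eqdef \mu_t f_n-\mu_0 f_n-\int_0^t\mu_s\tparen{\nabla f_n\cdot H(\mu_s)}\diff s\comma\qquad \quadvar{M^{f_n}}_t=\int_0^t\mu_s\abs{\nabla f_n}^2\diff s\comma
\]
and the task reduces to identifying the process $M_t$ from~\eqref{eq:pMartConv:0} as the limit of $M^{f_n}_t$ in a topology strong enough to preserve both the martingale property and the quadratic variation.

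First I would establish pointwise a.s.\ convergence $M^{f_n}_t\to M_t$ for each $t\in[0,T]$.  The boundary terms are handled by writing $\abs{\mu_t f_n-\mu_t f}\leq \norm{f_n-f}_0\cdot \mu_t\mbbM^d$ and invoking Lemma~\ref{l:MassConservation} together with hypothesis~\ref{i:p:MartConv:1}; the drift is controlled by
\[
\abs{\int_0^t\mu_s\tparen{(\nabla f_n-h)\cdot H(\mu_s)}\diff s}\leq \norm{H}_0 \int_0^T \mu_s\abs{\nabla f_n-h}\diff s \comma
\]
which vanishes a.s.\ by the boundedness of $H$ (from the global setting of Theorem~\ref{t:Main}) and hypothesis~\ref{i:p:MartConv:3}.

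To upgrade this to a topology preserving both the martingale property and the quadratic variation, I would exploit the martingale isometry
\[
\mbfE\tparen{M^{f_n}_T-M^{f_m}_T}^2=\mbfE\int_0^T\mu_s\abs{\nabla f_n-\nabla f_m}^2\diff s\fstop
\]
Under an effective uniform bound $\sup_n \norm{\nabla f_n}_0<\infty$, the right-hand side is dominated by a constant multiple of $\mbfE\int_0^T\mu_s\abs{\nabla f_n-\nabla f_m}\diff s$, which tends to zero by~\ref{i:p:MartConv:3} and dominated convergence.  Doob's $L^2$-inequality then delivers uniform $L^2(\mbfP)$-convergence on $[0,T]$, so that $M$ inherits the continuous $\msF_\bullet$-martingale structure.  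The same uniform bound, combined with the pointwise identity $\abs{a}^2-\abs{b}^2=(a-b)\cdot(a+b)$, yields $L^1(\mbfP)$-convergence of $\quadvar{M^{f_n}}_t$ to $\int_0^t\mu_s\abs{h}^2\diff s$, whence~\eqref{eq:pMartConv:00}.

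The main obstacle is this last $L^2$-step: hypothesis~\ref{i:p:MartConv:3} furnishes only $L^1$-convergence of $\nabla f_n$ towards $h$ against $\mu_s\otimes\diff s$, whereas the quadratic-variation identity~\eqref{eq:pMartConv:00} demands an $L^2$-statement.  Bridging the gap requires an effective uniform bound $\sup_n\norm{\nabla f_n}_0<\infty$ which is not literally listed among~\ref{i:p:MartConv:1}--\ref{i:p:MartConv:3}, but is automatic in the intended application to Theorem~\ref{t:Main}: taking $f_n$ to be a standard mollification of the piecewise affine function $g$ from Lemma~\ref{l:AuxFunction}, the condition $\abs{g'}=1$ on $\Sigma_g^\complement$ gives $\norm{\nabla f_n}_0\leq 1$ uniformly in $n$, and the argument closes.
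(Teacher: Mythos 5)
Your opening step coincides with the paper's: you pass to the limit in the martingale problem for the $f_n$, and your estimates for the boundary and drift terms (mass conservation via Lemma~\ref{l:MassConservation} together with hypothesis~\ref{i:p:MartConv:1}; Cauchy--Schwarz with $\norm{H}_0$ together with hypothesis~\ref{i:p:MartConv:3}) are literally the ones in the paper, and in fact give a.s.\ convergence of $M^{f_n}$ to $M$ \emph{uniformly} on $[0,T]$, which is the form the paper uses. Where you genuinely diverge is the limit-taking step. The paper invokes no $L^2(\mbfP)$ machinery at all: having shown $\sup_{t\leq T}\abs{M^{f_n}_t-M_t}\to 0$ a.s., hence in probability, it cites a stability result for continuous local martingales under uniform convergence in probability, \cite[Lem.~B.11]{CheEng05}, which transfers both the martingale property and the quadratic variation to the limit. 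That route needs no uniform bound on $\norm{\nabla f_n}_0$ to obtain the martingale property. Your isometry--Doob route is self-contained (no external limit theorem), but is more demanding: besides the uniform gradient bound it uses the identity $\quadvar{M^{f_n}-M^{f_m}}_t=\int_0^t\mu_s\abs{\nabla f_n-\nabla f_m}^2\diff s$, which you should justify explicitly --- it holds because $M^{f_n}-M^{f_m}=M^{f_n-f_m}$ with $f_n-f_m\in\Cb^2$, so Definition~\ref{d:Solution} applies directly --- and it needs the standing assumption $\mbfE[\mu_0\mbbM^d]<\infty$ both for $M^{f_n}_T\in L^2(\mbfP)$ and for your dominated-convergence passage from the a.s.\ statement~\ref{i:p:MartConv:3} to convergence of expectations; cite it.

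Concerning the extra hypothesis $\sup_n\norm{\nabla f_n}_0<\infty$: you are right that it is not among~\ref{i:p:MartConv:1}--\ref{i:p:MartConv:3}, so, taken literally, your argument proves the proposition only with this added assumption, i.e.\ a weaker statement than the one printed; a fully faithful proof should either use the soft stability lemma as the paper does or note the reduction. You are also right that the loss is immaterial for the paper: in the proof of Theorem~\ref{t:Main} the approximants satisfy $\abs{\nabla f_n}\leq 1$ by construction. Moreover, the obstacle you isolate is not a pure artifact of your method: hypothesis~\ref{i:p:MartConv:3} is an $L^1$-statement, and even along the paper's route --- where \cite[Lem.~B.11]{CheEng05} yields convergence of $\quadvar{M^{f_n}}$ to $\quadvar{M}$ --- identifying the limit as $\int_0^t\mu_s\abs{h}^2\diff s$, rather than merely bounding it from below by Fatou along an a.e.-convergent subsequence, implicitly uses uniform-integrability input of exactly the kind your bound supplies (and which the application guarantees, since there $\abs{\nabla f_n}\leq 1$ and $\abs{h}=1$ on the relevant set). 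So your proposal is a correct proof of the statement-plus-bound by a different, more hands-on argument, and it flags a real subtlety; what it buys is self-containedness and quantitative $L^2$ control, while the paper's citation buys the martingale property of the limit with no integrability hypotheses beyond the definition of a solution.
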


\begin{proof}
By Definition~\ref{d:Solution}, for every~$n\in \N$, the processes~$M^n_\bullet\eqdef \seq{M^n_t}_{t\in [0,T]}$ with
\begin{equation}\label{eq:p:MartConv:3}
M^n_t\eqdef \mu_t f_n - \mu_0 f_n -\int_0^t \mu_s\tparen{\nabla f_n\cdot H(\mu_s)} \diff s \comma \qquad t\in [0,T]\comma
\end{equation}
is a continuous martingale w.r.t.\ the \emph{same} filtration~$\msF_\bullet$, with quadratic variation
\begin{equation}\label{eq:p:MartConv:4}
\quadvar{M^n}_t= \int_0^t \mu_s\abs{\nabla f_n}^2 \diff s \comma \qquad t\in [0,T] \fstop
\end{equation}

The conclusion will follow letting~$n\to\infty$ in~\eqref{eq:p:MartConv:3} and applying~\cite[Lem.~B.11]{CheEng05}, provided we show that~$M^n_\bullet$ converges to~$M_\bullet$ in probability uniformly on~$[0,T]$, that is
\begin{equation}\label{eq:p:MartConv:6}
\mbbP\text{-}\lim_n \sup_{t\leq T} \abs{M^n_t - M_t} = 0 \fstop
\end{equation}
We show the stronger statement that
\[
\lim_n \sup_{t\leq T} \abs{M^n_t - M_t} = 0 \quad \text{a.s.}
\]

Indeed, by~\ref{i:p:MartConv:1},
\begin{equation}\label{eq:p:MartConv:1}
\lim_n \abs{\mu_0f_n-\mu_0 f}=0 \quad \text{a.s.}
\end{equation}
By Lemma~\ref{l:MassConservation} and by~\ref{i:p:MartConv:1},
\begin{equation}\label{eq:p:MartConv:2}
\lim_n \sup_{t\in [0,T]} \abs{\mu_t f_n - \mu_t f} \leq \lim_n \sup_{t\in [0,T]} \mu_t\mbbM^d \norm{f_n-f}_0 = \mu_0 \mbbM^d \lim_n \norm{f_n-f}=0 \fstop
\end{equation}
By Cauchy--Schwarz inequality, uniform boundedness of~$H\colon \Mbp\to\R^d$, and~\ref{i:p:MartConv:3},
\begin{align}
\nonumber
\lim_n \sup_{t\in [0,T]}& \abs{\int_0^t \mu_s\tparen{\nabla f_n\cdot H(\mu_s)} \diff s- \int_0^t \mu_s\tparen{h\cdot H(\mu_s)} \diff s} \leq
\\
\nonumber
&\leq \lim_n \sup_{t\in [0,T]} \int_0^t \mu_s\abs{\paren{\nabla f_n-h}\cdot H(\mu_s)} \diff s
\\
\label{eq:p:MartConv:5}
&\leq\norm{H}_0 \lim_n \int_0^T \mu_s \abs{\nabla f_n-h} \diff s=0 \fstop
\end{align}

Combining~\eqref{eq:p:MartConv:1},~\eqref{eq:p:MartConv:2}, and~\eqref{eq:p:MartConv:5} shows~\eqref{eq:p:MartConv:6} and thus the assertion.
\end{proof}

We are now ready to prove our main result.

\begin{proof}[Proof of Theorem~\ref{t:Main}]
Fix~$\mu_0\in\Mbp$ and set~$c\eqdef \mu_0\mbbM^d>0$.
We argue by contradiction that there exists a solution~$\seq{\mu_t}_t$ to~\eqref{eq:DK} starting at~$\mu_0$.

Let~$g$ be the function constructed in Lemma~\ref{l:AuxFunction} and, for every~$\eps>0$, define~$g_\eps\colon \mbbM^1\to\R$ as a regularization of~$g$ satisfying:
\begin{enumerate*}[$(a_g)$]
\item\label{i:t:Main:1g} $g_\eps\in\Cb^2$ and $g_\eps$ converges to~$g$ uniformly on~$\mbbM^1$ as $\eps\downarrow 0$;
\item\label{i:t:Main:2g} $g_\eps'$ converges to~$g'\equiv 1$ locally uniformly away from~$\Sigma_g$ as $\eps\downarrow 0$;
\item\label{i:t:Main:3g} $\abs{g_\eps'}\leq 1$ everywhere on~$\mbbM^1$.
\end{enumerate*}
Finally, define~$f_\eps\colon \mbbM^d\to\R$ and~$f\colon \mbbM^d\to\R$ by~$f_\eps(x)\eqdef g_\eps(x_1)$ and~$f(x)\eqdef g(x_1)$ respectively, where~$x=\seq{x_1,\dotsc, x_d}\in \mbbM^d$.
Now, let~$\eps\eqdef 1/n$  and put, for simplicity of notation,~$f_n\eqdef f_{\eps_n}$
From~\ref{i:t:Main:1g}-\ref{i:t:Main:3g} above we deduce the analogous properties for~$f_n$ and~$f$, that is
\begin{enumerate}[$(a_f)$]
\item\label{i:t:Main:1} $f_n\in\Cb^2$ converges to~$f$ uniformly on~$\mbbM^d$ as $n\to\infty$;
\item\label{i:t:Main:2} $\nabla f_n$ converges to~$\nabla f$ locally uniformly away from~$\Sigma_f$ as $n\to\infty$;
\item\label{i:t:Main:3} $\abs{\nabla f_n}\leq 1$ everywhere on~$\mbbM^d$.
\end{enumerate}

\paragraph{Step 1} We start by verifying the assumptions in Proposition~\ref{p:MartConv}.
The singular set~$\Sigma_f$ of~$f$ satisfies~$\Sigma_f=\Sigma_g\times \mbbM^{d-1}$. Thus, for every~$t\in [0,T]$,
\begin{equation*}
\mbfE\int_0^t \mu_s \Sigma_f \diff s \leq \mbfE\int_0^T \mu_s\Sigma_f \diff s = \mbfE\int_0^T \mu_s(\Sigma_g\times \mbbM^{d-1}) \diff s = \mu^*_T\Sigma_g=0
\end{equation*}
by Lemma~\ref{l:AuxFunction}\ref{i:l:AuxFunction:3}, and therefore
\begin{equation}\label{eq:t:Main:1}
\int_0^t \mu_s \Sigma_f \diff s = 0 \quad \text{a.s.}\comma\qquad t\in [0,T] \fstop
\end{equation}
Respectively: by~\eqref{eq:t:Main:1}; since~$(\nabla f)(x)= g_\eps'(x_1)=1$ on~$\Sigma_f^\complement$ by definition of~$f$ and Lemma~\ref{l:AuxFunction}\ref{i:l:AuxFunction:2}; and by Lemma~\ref{l:MassConservation},
\begin{equation}\label{eq:t:Main:2}
\int_0^t \mu_s \abs{\nabla f}^2 \diff s = \int_0^t \mu_s \restr{\Sigma_f^\complement} \abs{\nabla f}^2 \diff s = \int_0^t \mu_s \car \diff s = c\, t \quad \text{a.s.}\comma\qquad t\in [0,T]\comma
\end{equation}
which shows in particular that the integral in the left-hand side of~\eqref{eq:t:Main:2} is well-defined for every~$t\in [0,T]$ and thus that
\begin{equation*}
\mu_s\abs{\nabla f}^2=\mu_s\car \quad \text{is a.s.\ well-defined for a.e.~} s\in [0,T] \fstop
\end{equation*}
This shows that in Proposition~\ref{p:MartConv} we may choose~$h=\nabla f$.

Fix~$s\in [0,T]$. Since~$\mu_s$ is a.s.\ a finite measure, by the convergence in~\ref{i:t:Main:2} and Dominated Convergence in~$L^1(\mu_s)$ 
because of~\ref{i:t:Main:3},
\begin{equation}\label{eq:t:Main:4}
\lim_{n\to\infty} \int \abs{\nabla f_n -\nabla f} \diff\mu_s =0\qquad \text{a.s.}\comma \quad \text{for a.e.~} s\in [0,T] \fstop
\end{equation}
By~\ref{i:t:Main:3} and Lemma~\ref{l:MassConservation},
\begin{equation*}
\mu_s \abs{\nabla f_n-\nabla f}\leq 2\mu_s \car = 2c\qquad  \text{a.s.} \comma \quad \text{for a.e.~} s\in [0,T] \comma \quad n\in \N\fstop
\end{equation*}
Thus, the function~$s\mapsto \mu_s \abs{\nabla f_\eps -\nabla f}$ is a.s.\ $\Leb^1$-essentially bounded on~$[0,T]$ uniformly in~$n$.
By the convergence in~\eqref{eq:t:Main:4} for a.e.~$s\in [0,T]$ and Dominated Convergence in~$L^1([0,T])$ with dominating function~$2c\in L^1([0,T])$,
\begin{equation}\label{eq:t:Main:5}
\lim_{n\to\infty} \int_0^T \mu_s\abs{\nabla f_n - \nabla f} \diff s =0  \qquad \text{a.s.}
\end{equation}

Note that~\ref{i:t:Main:1} verifies the assumption in Proposition~\ref{p:MartConv}\ref{i:p:MartConv:1}, while~\eqref{eq:t:Main:5} verifies Proposition~\ref{p:MartConv}\ref{i:p:MartConv:3}.

\paragraph{Step 2} 
Applying Proposition~\ref{p:MartConv} with~$f$ as above and~$h\equiv\nabla f$, the process~$B_\bullet\eqdef\seq{B_t}_{t\in [0,T]}$ with
\[
B_t\eqdef \mu_t f - \mu_0 f-\int_0^t \mu_s\tparen{\nabla f\cdot H(\mu_s)} \diff s \comma \qquad t\in [0,T]\comma
\]
is well-defined and a continuous martingale w.r.t.~$\msF_\bullet$ with quadratic variation
\[
\quadvar{B}_t= \int_0^t \mu_s\abs{\nabla f}^2 \diff s = c\, t \comma \qquad t\in [0,T] \fstop
\]

By L\'evy's characterization, the process~$W_\bullet\eqdef\seq{W_t}_{t\in [0,T]}$ with~$W_t\eqdef B_{t/c}$, is a standard one-dimensional Brownian motion.
Note that~$c\eqdef \mu_0(\mbbM^d)>0$ is $\msF_0$-measurable, therefore it is independent of~$W_\bullet$ since the latter is an $\msF_\bullet$-Brownian motion, see e.g.~\cite[Prob.~2.5.5, p.~73]{KarShr88}.
As a consequence, the set
\[
E\eqdef \set{B_T<-c \norm{H}_0 T}
\]
has positive $\mbfP$-probability.

On the one hand, on the set of positive probability~$E$,
\begin{align*}
\mu_T f &= \int_0^T \mu_s \tparen{\nabla f\cdot H(\mu_s)} \diff s + B_T
\\
&< \norm{H}_0 \int_0^T \mu_s\car \diff s - c \norm{H}_0 T =0 \fstop
\end{align*}
On the other hand,~$\mu_T f$ is a.s.\ non-negative, since~$f$ is a non-negative function by the choice of~$g$ and Lemma~\ref{l:AuxFunction}\ref{i:l:AuxFunction:1}.
Thus we have reached a contradiction, as desired.
\end{proof}

\section{Possible extensions}
Let us collect here some observations about possible extensions of our main result.

Solutions to the free Dean--Kawasaki equation have been constructed in~\cite{LzDS17+,LzDS24,KonLehvRe19} in a far more general setting than~$\mbbM^d$, encompassing e.g.\ Riemannian manifolds, as well as some `non-smooth spaces'.
For the sake of simplicity, let us discuss the case of a Riemannian manifold~$M$ with Riemannian metric~$\mssg$.
A definition of solution to~\eqref{eq:DK} is given again in terms of the martingale problem in Definition~\ref{d:Solution}, replacing the Laplacian on~$\mbbM^d$ with the Laplace--Beltrami operator~$\Delta_\mssg$ on~$M$, the gradient with the Riemannian gradient~$\nabla^\mssg$ induced by~$\mssg$, and the scalar product with the metric~$\mssg$ itself.

We expect the non-existence result in Theorem~\ref{t:Main} to be a \emph{structural property} of the equation, rather than a feature of the ambient space, and thus to extend to this more general setting as well.
Indeed, given a solution~$\mu_\bullet$ up to time~$T$, the proof depends only on the construction of a function~$f\colon M\to \R^+$ satisfying~$\abs{\nabla f}\equiv \car$ on some Borel set~$A\subset M$ $\mu_t$-negligible for $\Leb^1$-a.e.~$t\in [0,T]$.
To control this negligibility when~$M=\mbbM^d$, we introduced the measure~$\mu^*_T$ in~\eqref{eq:AuxMeas} as the time average of the marginal of~$\mu_\bullet$ on~$\mbbM^1$ with respect to the projection onto the \emph{first coordinate}.
On a general manifold, this can be done by choosing~$\mu^*_T$ as the time average of the marginal of~$\mu_\bullet$ on~$\R^+_0$ with respect to the projection onto the \emph{radial coordinate} in a spherical coordinate system centered at any point~$o\in M$, viz.
\[
\mu^*_T[0,r) \eqdef \int_0^T \mu_s B_r(o) \diff s\comma \qquad T>0\comma \quad r>0\comma
\]
where~$B_r(o)$ is the ball in~$M$ of radius~$r>0$ and center~$o$ w.r.t.\ the intrinsic distance~$\mssd_\mssg$ on~$M$ induced by~$\mssg$.
A function~$g\colon \R^+_0\to\R^+$ may then be constructed from Lemma~\ref{l:AuxFunction}, so that~$f(x)=g\tparen{\mssd_\mssg(x,o)}$ has the desired properties.

For a general manifold~$M$, the argumentation above is not sufficient to prove the conclusion, since we also need to show that~$\mu_s$ vanishes on the singular set~$\Sigma_f\subset M$ of~$f$, and this set includes the \emph{cut locus} of the point~$o$, which is generally `large' and wildly dependent on~$o$.
However, the argument can be made rigorous on manifolds with only \emph{one chart}, (including Euclidean spaces, hyperbolic spaces, etc.) in which the cut locus of any~$o$ is empty, and on standard spheres, in which the cut locus of~$o$ exactly consists of its antipodal point.


\begin{thebibliography}{10}

\bibitem{AdaDirPelZim11}
S.~Adams, N.~Dirr, M.~A. Peletier, and J.~Zimmer.
\newblock {From a Large-Deviations Principle to the Wasserstein Gradient Flow:
  A New Micro-Macro Passage}.
\newblock {\em {Commun.\ Math.\ Phys.}}, 307(3):791--815, 2011.
\newblock \href {https://doi.org/10.1007/s00220-011-1328-4}
  {\path{doi:10.1007/s00220-011-1328-4}}.

\bibitem{BenKipLan95}
O.~Benois, C.~Kipnis, and C.~Landim.
\newblock {Large deviations from the hydrodynamical limit of mean zero
  asymmetric zero range processes}.
\newblock {\em {Stoch.\ Proc.\ Appl.}}, 55(1):65--89, January 1995.
\newblock \href {https://doi.org/10.1016/0304-4149(95)91543-a}
  {\path{doi:10.1016/0304-4149(95)91543-a}}.

\bibitem{BerDeSGabJon15}
L.~Bertini, A.~De~Sole, D.~Gabrielli, G.~Jona-Lasinio, and C.~Landim.
\newblock Macroscopic fluctuation theory.
\newblock {\em {Rev.\ Mod.\ Phys.}}, 87(2):593--636, June 2015.
\newblock \href {https://doi.org/10.1103/revmodphys.87.593}
  {\path{doi:10.1103/revmodphys.87.593}}.

\bibitem{CheEng05}
A.~S. Cherny and H.-J. Engelbert.
\newblock {\em {Singular Stochastic Differential Equations}}, volume 1858 of
  {\em {Lecture Notes in Mathematics}}.
\newblock Springer Berlin Heidelberg, 2005.
\newblock \href {https://doi.org/10.1007/b104187} {\path{doi:10.1007/b104187}}.

\bibitem{CorFis23b}
F.~Cornalba and J.~Fischer.
\newblock {Multilevel Monte Carlo methods for the Dean--Kawasaki equation from
  Fluctuating Hydrodynamics}.
\newblock {\em {arXiv:2311.08872}}, 2023.
\newblock \href {https://doi.org/10.48550/ARXIV.2311.08872}
  {\path{doi:10.48550/ARXIV.2311.08872}}.

\bibitem{CorFis23}
F.~Cornalba and J.~Fischer.
\newblock {The Dean--Kawasaki Equation and the Structure of Density
  Fluctuations in Systems of Diffusing Particles}.
\newblock {\em {Arch.\ Rational Mech.\ Anal.}}, 247(5), August 2023.
\newblock \href {https://doi.org/10.1007/s00205-023-01903-7}
  {\path{doi:10.1007/s00205-023-01903-7}}.

\bibitem{CorFisIngRai23}
F.~Cornalba, J.~Fischer, J.~Ingmanns, and C.~Raithel.
\newblock {Density fluctuations in weakly interacting particle systems via the
  Dean--Kawasaki equation}.
\newblock {\em {arXiv:2303.00429}}, 2023.
\newblock \href {https://doi.org/10.48550/ARXIV.2303.00429}
  {\path{doi:10.48550/ARXIV.2303.00429}}.

\bibitem{Dea96}
D.~S. Dean.
\newblock {Langevin equation for the density of a system of interacting
  Langevin processes}.
\newblock {\em {J.~Phys.~A: Math.\ Gen.}}, 29:L613--L617, 1996.

\bibitem{LzDS17+}
L.~Dello~Schiavo.
\newblock {The Dirichlet--Ferguson Diffusion on the Space of Probability
  Measures over a Closed Riemannian Manifold}.
\newblock {\em {Ann.\ Probab.}}, 50(2):591--648, 2022.
\newblock \href {https://doi.org/10.1214/21-AOP1541}
  {\path{doi:10.1214/21-AOP1541}}.

\bibitem{LzDS24}
L.~Dello~Schiavo.
\newblock {Massive Particle Systems, Wasserstein Brownian Motions, and the
  Dean--Kawasaki Equation}.
\newblock {\em {arXiv:2411.14936}}, pages 1--103, 2024.
\newblock \href {https://doi.org/10.48550/arXiv.2411.14936}
  {\path{doi:10.48550/arXiv.2411.14936}}.

\bibitem{DirStaZim16}
N.~Dirr, M.~Stamatakis, and J.~Zimmer.
\newblock {Entropic and gradient flow formulations for nonlinear diffusion}.
\newblock {\em {J.~Math.\ Phys.}}, 57(8), August 2016.
\newblock \href {https://doi.org/10.1063/1.4960748}
  {\path{doi:10.1063/1.4960748}}.

\bibitem{DjuKrePer24}
A.~Djurdjevac, H.~Kremp, and N.~Perkowski.
\newblock {Weak error analysis for a nonlinear SPDE approximation of the
  Dean--Kawasaki equation}.
\newblock {\em {Stoch.\ PDE: Anal.\ Comp.}}, March 2024.
\newblock \href {https://doi.org/10.1007/s40072-024-00324-1}
  {\path{doi:10.1007/s40072-024-00324-1}}.

\bibitem{EmbDirZimRei18}
P.~Embacher, N.~Dirr, J.~Zimmer, and C.~Reina.
\newblock {Computing diffusivities from particle models out of equilibrium}.
\newblock {\em {Proc.\ R.~Soc.~A}}, 474(2212):20170694, April 2018.
\newblock \href {https://doi.org/10.1098/rspa.2017.0694}
  {\path{doi:10.1098/rspa.2017.0694}}.

\bibitem{FehGes23}
B.~Fehrman and B.~Gess.
\newblock {Non-equilibrium large deviations and parabolic-hyperbolic PDE with
  irregular drift}.
\newblock {\em {Invent.\ math.}}, 234(2):573--636, July 2023.
\newblock \href {https://doi.org/10.1007/s00222-023-01207-3}
  {\path{doi:10.1007/s00222-023-01207-3}}.

\bibitem{FehGes24b}
B.~Fehrman and B.~Gess.
\newblock {Conservative stochastic PDEs on the whole space}.
\newblock {\em {arXiv:2410.00254}}, 2024.
\newblock \href {https://doi.org/10.48550/ARXIV.2410.00254}
  {\path{doi:10.48550/ARXIV.2410.00254}}.

\bibitem{FehGes24}
B.~Fehrman and B.~Gess.
\newblock {Well-Posedness of the Dean--Kawasaki and the Nonlinear
  Dawson--Watanabe Equation with Correlated Noise}.
\newblock {\em {Arch.\ Rational Mech.\ Anal.}}, 248(2), March 2024.
\newblock \href {https://doi.org/10.1007/s00205-024-01963-3}
  {\path{doi:10.1007/s00205-024-01963-3}}.

\bibitem{GesGvaKon22}
B.~Gess, R.~S. Gvalani, and V.~Konarovskyi.
\newblock {Conservative SPDEs as fluctuating mean field limits of stochastic
  gradient descent}.
\newblock {\em {Probab.\ Theor.\ Relat. Fields (to appear)}}, 2022.
\newblock \href {https://doi.org/10.48550/ARXIV.2207.05705}
  {\path{doi:10.48550/ARXIV.2207.05705}}.

\bibitem{JacZim14}
R.~L. Jack and J.~Zimmer.
\newblock {Geometrical interpretation of fluctuating hydrodynamics in diffusive
  systems}.
\newblock {\em {J.~Phys.~A: Math.\ Theor.}}, 47(48):485001, November 2014.
\newblock \href {https://doi.org/10.1088/1751-8113/47/48/485001}
  {\path{doi:10.1088/1751-8113/47/48/485001}}.

\bibitem{KarShr88}
I.~Karatzas and S.~E. Shreve.
\newblock {\em {Brownian Motion and Stochastic Calculus}}.
\newblock Springer US, 1988.
\newblock \href {https://doi.org/10.1007/978-1-4684-0302-2}
  {\path{doi:10.1007/978-1-4684-0302-2}}.

\bibitem{Kaw94}
K.~Kawasaki.
\newblock {Stochastic model of slow dynamics in supercooled liquids and dense
  colloidal suspensions}.
\newblock {\em {Physica A: Statist.\ Mech.\ Appl.}}, 208(1):35--64, July 1994.
\newblock \href {https://doi.org/10.1016/0378-4371(94)90533-9}
  {\path{doi:10.1016/0378-4371(94)90533-9}}.

\bibitem{KonLehvRe19}
V.~Konarovskyi, T.~Lehmann, and M.-K. von Renesse.
\newblock {Dean-Kawasaki dynamics: ill-posedness vs.~triviality}.
\newblock {\em {Electron.\ Commun.\ Probab.}}, 24, jan 2019.
\newblock \href {https://doi.org/10.1214/19-ecp208}
  {\path{doi:10.1214/19-ecp208}}.

\bibitem{KonLehvRe19b}
V.~Konarovskyi, T.~Lehmann, and M.-K. von Renesse.
\newblock {On Dean-Kawasaki Dynamics with Smooth Drift Potential}.
\newblock {\em {J.~Statist.\ Phys.}}, 178(3):666--681, nov 2019.
\newblock \href {https://doi.org/10.1007/s10955-019-02449-3}
  {\path{doi:10.1007/s10955-019-02449-3}}.

\bibitem{KonMue23}
V.~Konarovskyi and F.~M{\"{u}}ller.
\newblock {Dean--Kawasaki equation with initial condition in the space of
  positive distributions}.
\newblock {\em {J.~Evol.\ Equ.}}, 24(4), October 2024.
\newblock \href {https://doi.org/10.1007/s00028-024-01018-w}
  {\path{doi:10.1007/s00028-024-01018-w}}.

\bibitem{KonvRe18}
V.~Konarovskyi and M.-K. von Renesse.
\newblock {Modified Massive Arratia flow and Wasserstein diffusion}.
\newblock {\em {Comm.\ Pure Appl.\ Math.}}, 72(4):764--800, 2019.
\newblock \href {https://doi.org/10.1002/cpa.21758}
  {\path{doi:10.1002/cpa.21758}}.

\bibitem{KonvRe17}
V.~Konarovskyi and M.-K. von Renesse.
\newblock {Reversible coalescing-fragmentating Wasserstein dynamics on the real
  line}.
\newblock {\em {J.~Funct.\ Anal.}}, 286(8):110342, April 2024.
\newblock \href {https://doi.org/10.1016/j.jfa.2024.110342}
  {\path{doi:10.1016/j.jfa.2024.110342}}.

\bibitem{MuevReZim24}
F.~M{\"{u}}ller, M.-K. von Renesse, and J.~Zimmer.
\newblock {Well-Posedness for Dean--Kawasaki Models of Vlasov-Fokker-Planck
  Type}.
\newblock {\em {arXiv:2411.14334}}, nov 2024.
\newblock \href {https://doi.org/10.48550/arXiv.2411.14334}
  {\path{doi:10.48550/arXiv.2411.14334}}.

\bibitem{QuaRezVar99}
J.~Quastel, F.~Rezakhanlou, and S.~R.~S. Varadhan.
\newblock {Large deviations for the symmetric simple exclusion process in
  dimensions $d\geq3$}.
\newblock {\em {Probab.\ Theory Relat.\ Fields}}, 113(1):1--84, February 1999.
\newblock \href {https://doi.org/10.1007/s004400050202}
  {\path{doi:10.1007/s004400050202}}.

\bibitem{VruLoeWitt20}
M.~te~Vrugt, H.~L{\"{o}}wen, and R.~Wittkowski.
\newblock Classical dynamical density functional theory: from fundamentals to
  applications.
\newblock {\em {Adv.\ Phys.}}, 69(2):121--247, April 2020.
\newblock \href {https://doi.org/10.1080/00018732.2020.1854965}
  {\path{doi:10.1080/00018732.2020.1854965}}.

\end{thebibliography}

\end{document}